\documentclass[11pt, a4j]{article}
\usepackage{amsmath,amsfonts,amsthm,amssymb,amscd, mathtools}
\usepackage[colorlinks=true, allcolors=blue]{hyperref}

\usepackage{amsthm}
\usepackage{enumitem}
\usepackage{extarrows}
\usepackage{xcolor}
\usepackage{graphicx}
\usepackage{pxrubrica}
\usepackage{multicol}
\usepackage{ascmac}
\usepackage{adjustbox}
\usepackage{xtab}
\usepackage{refcount}
\usepackage{braket}
\usepackage{url}
\usepackage{tikz}
\usepackage{tikz-cd}
\usetikzlibrary{matrix, arrows, decorations.pathmorphing}
\usetikzlibrary{positioning}

\DeclareMathOperator{\id}{id}
\def\dfnref#1{Definition~\ref{dfn::#1}}
\def\thmref#1{Theorem~\ref{thm::#1}}
\def\prpref#1{Proposition~\ref{prp::#1}}
\def\lmmref#1{Lemma~\ref{lmm::#1}}

\def\eqref#1{(\ref{eq::#1})}

\theoremstyle{definition}
\newtheorem{dfn}{Definition}[section]
\newtheorem{thm}[dfn]{Theorem}
\newtheorem{prp}[dfn]{Proposition}
\newtheorem{lmm}[dfn]{Lemma}
\newtheorem{rem}[dfn]{Remark}
\newtheorem{cor}[dfn]{Corollary}
\newtheorem{claim}[dfn]{Claim}

\allowdisplaybreaks[2]

\title{A categorical proof of the nonexistence of $(120, 35, 10)$-difference sets}
\author{
  Hiroki Kajiura\thanks{
     Osaka University of Commerce,
     Mikuriya-Sakae,
     Higashi-Osaka Japan, zip.\ 577-0036.
     \href{mailto:hkajiura@daishodai.ac.jp}{hkajiura@daishodai.ac.jp}
  }
     \and
  Makoto Matsumoto\thanks{
    AMAGAERU Institute of Free Mathematics,
    2-37-6, Narita-Higashi, Suginami-ku Tokyo
    Japan, zip.\ 166-0015.
    \href{mailto:matmotmak@gmail.com}{matmotmak@gmail.com}
  }
}

\date{\today}
\begin{document}
	\maketitle

\begin{abstract}
A difference set with
parameters $(v, k, \lambda)$ is a subset $D$
of cardinality $k$ in a finite group $G$
of order $v$, such that the number $\lambda$
of occurrences of $g \in G$ as the ratio
$d^{-1}d'$ in distinct
pairs $(d, d')\in D\times D$ is
independent of $g$.
We prove the nonexistence of $(120, 35, 10)$-difference sets,
which has been an open problem for 70 years
since Bruck introduced the notion of nonabelian difference sets.

Our main tools are
1.\ a generalization of the category of finite groups
to that of association schemes
(actually, to that of relation partitions),
2.\ a generalization of difference sets to equi-distributed functions
and its preservation by pushouts along quotients,
3.\ reduction to a linear programming in
the nonnegative integer lattice
with quadratic constraints.
\end{abstract}

\section{Introduction}
For the basic terminology on difference sets and block designs, see
a text book \cite{LINT}.
For a finite group $G$ of order $v$, a subset $D\subseteq G$
of cardinality $k$
is said to be a difference set, if and only if,
for any $d\in G$
other than the unit, the cardinality $\lambda$ of
the set
$$
\{
  (g,h)\in G \times G \mid g^{-1} h=d
\}
$$
is independent of $d$.
If $k=0, 1, v-1$ or $v$, the condition trivially holds,
and $D$ is said to be a trivial difference set.
The triple $(v, k, \lambda)$ is called the parameter set of
the difference set. The family of translations
$
gD\subseteq G, g\in G
$
forms a symmetric $(v, k, \lambda)$-block design.
A simple argument shows a strong constraint
$k(k-1)=\lambda (v-1)$.

The notion of difference set is very classical for cyclic groups.
In 1955, Bruck~\cite{difference-set} enlarged the definition
to nonabelian groups as above. The second author
heard a rumor that he mentioned in a talk
on the existence of $(120, 35, 10)$-difference set
in the symmetric group $S_5$ of degree five,
since symmetric groups are most typical
nonabelian finite groups and the
above condition $k(k-1)=\lambda (v-1)$
has a nontrivial solution
for $v=5!$, as the first among factorials
($n!-1$ is a prime for $n\leq 4$).
The existence of nontrivial difference set
in a group of order 120
has been an open problem,
to which this paper answers no.

There are many researches
on the existence of nontrivial difference set,
and most researches use computers.
For example in the case of cyclic groups,
in 1956,
Marshall Hall Jr.\
\cite{DIFF-SET-HALL}
reported that for all $3\leq k \leq 50$,
the existence/nonexistence was decided by using
a computer SWAC, except for 12 cases including
$(120,35,10)$. This parameter set seems to be tough, since
the other 11 cases in cyclic groups are excluded by
\cite{DIFF-SET-MANN, TURYN1, TURYN2}.

In his seminal work \cite{TURYN} on the use of character sums
in 1965,
difference sets with this parameter set
in abelian groups
are excluded (see P.333 there).
For nonabelian groups, in 1978, Kibler \cite{KIBLER}
listed all noncyclic difference sets with $k<20$,
but it does not cover this parameter set.

There are three abelian, 41 nonabelian solvable,
and three nonsolvable groups of order 120,
up to isomorphism.
In 2005, in his doctoral thesis,
Becker~\cite{DIFF-SET-BECKER}
excluded all 44 nonabelian groups of order 120,
except for three solvable groups and the three nonsolvable groups.
As far as the authors know, this is the present state
for the parameter
$(120, 35, 10)$.
(Note that the existence of $(120, 35, 10)$
symmetric block design is still open
\cite[P.52, No. 871]{HANDBOOK-DESIGN}.)
Our method introduced here, with a help of a modest
(note personal) computer and GAP system \cite{GAP}, excluded all these six groups,
leading to the nonexistence of such difference sets.
The method is influenced by the inductive search
along a (fine) sequence of quotient groups, introduced by
Peifer \cite{Peifer} in 2019.
There, he used a chief series
(i.e., a decreasing sequence of normal subgroups
of $G$), and his algorithm does not work well
for such a group as $S_5$ who has
a big simple normal subgroup $A_5$.

Our method is new in the following two points:
1.\ we enlarged the category of finite groups
to the category of unital regular relation partitions
(see Section~\ref{sec:rel-part}),
which is obtained by omitting
important axioms from
the definition of association schemes
\cite{Bannai}
(and our category has the category of
association schemes \cite{HANAKI-CAT, ZIESCHANG}
as a full subcategory, which actually suffices
to run our algorithm),
2.\ we introduced the notion of difference sets
with multiplicities in relation partitions
so that an inductive search works.
Our advantage as an algorithm lies
in the fact that we have much more subgroups
than normal subgroups so that the inductive search
is easier (since $G/H$ for any subgroup $H<G$
is an association scheme called Schurian).

Our result is a good news for both of those
who study association schemes, and those
who use category theory in combinatorics.
For example, Bannai often claimed that association schemes
are ``correct'' generalization of finite groups
(to be exact, that they are ``group theory without groups''), but
it seems that a clear justification is difficult.
The second author knows some combinatorists who claim that
category theory is less useful in combinatorics
than other areas of mathematics. We showed
that the category of association schemes
gives a solution to a 70-years-open problem
after Bruck in combinatorial
group theory, i.e.,
nonexistence of $(120, 35, 10)$-difference sets.

In Section~\ref{sec:diff-set},
we give these generalizations
(even in a broader sense: a difference set
is a special case of multi difference sets,
which are special cases of equi-distributed functions
introduced here).
We remark that a generalization of difference sets
to association schemes is first introduced in
\cite[Definition 5.16]{KAJIURA-MATSUMOTO-OKUDA}.
In Section~\ref{sec:pushout}, we prove the main
Theorem~\ref{thm::pushOutPreserves}
claiming that the pushout of an equi-distributed
function to a quotient is again equi-distributed,
which contains as special cases
\cite[Lemma, P. 469]{difference-set} and
\cite[Lemmas 5 and 6]{Peifer}.
In Section~\ref{sec:algorithm}, we explain
an algorithm for an exhaustive search for
multi difference sets with a given parameter set
based on an induction along quotient maps, using
the theorem above.
By a theorem on the variance (Theorem~\ref{thm:variance})
of an equi-distributed function,
the search is reduced to a Linear Programming
in the nonnegative integer lattice
with (one positive definite and other) quadratic constraints.
\section{Difference set and its generalization}
\label{sec:diff-set}
\subsection{Relation partitions and association schemes}
\label{sec:rel-part}
The following notion is introduced in \cite[Section 2]{KMO}
by Okuda and the authors, which may be seen
as a generalization of
the notion of association schemes,
obtained by omitting some axioms.
We will not use the (important) omitted
axioms in this paper.
\begin{dfn}
Let $X, I$ be sets, and $R:X\times X\longrightarrow I$ a surjection.
We call $(X, R, I)$ a relation partition.
Let $(Y, Q, J)$ be another relation partition.
A morphism from $(X, R, I)$ to $(Y, Q, J)$ is
a pair of functions $f: X\longrightarrow Y$ and $\sigma: I\longrightarrow J$
such that the following diagram commutes:

		\[\begin{tikzcd}
			X\times X \arrow[r,"R"]\arrow[d, "f\times f"'] & I \arrow[d,"\sigma"]\\
			Y\times Y \arrow[r, "Q"']                      & J.
		\end{tikzcd}\]
		It is easy to check that these morphisms constitute the category of relation partitions.
\end{dfn}

\begin{dfn}
A relation partition $(X, R, I)$ is
\textit{regular} if for every $i\in I$, there is an integer $k_i$ such that for any $x\in X$
		\[\#\{x'\in X\mid R(x, x')=i\} = k_i = \#\{x'\in X\mid R(x', x) = i\}\]
holds. The integer $k_i$ is called the $i$--th valency of $(X, R, I)$.
\end{dfn}

\begin{dfn}
A relation partition $(X, R, I)$ is \textit{unital} if there exists
an $i_0 \in I$ such that
\[R^{-1}(\{i_0\}) = \{(x, x) \mid x \in X\}\]
holds.
Such an $i_0$ is unique, if exists,
and is called the \textit{unit} of $(X, R, I)$.
\end{dfn}

\begin{rem}
An association scheme \cite{Bannai} is
a regular unital relation partition.
The category of association schemes studied in
\cite{HANAKI-CAT} and
\cite{ZIESCHANG} is the full subcategory
of the above category of regular unital relation partitions.
Indeed, in this paper, all relation partitions
appeared are association schemes
and
actually Schurian schemes (see \S~\ref{sec:algorithm}).
Thus, it suffices to consider the
category of association schemes
(i.e.\ logically no need to introduce the new category),
but the notion of relation partitions
is much simpler.
\end{rem}
It is easy to see:
\begin{prp}
If $(X, R, I)$ is a unital relation partition,
and $(X', R', I')$ is another unital relation partition,
then any morphism $(f, \sigma)$ from $(X, R, I)$ to $(X', R', I')$
maps the unit $i_0$ to the unit $i'_0$.
\end{prp}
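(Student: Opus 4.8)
The plan is to unwind the definitions and chase the commutative square. Suppose $(f,\sigma)\colon (X,R,I)\to(X',R',I')$ is a morphism of relation partitions, and let $i_0\in I$ be the unit of the source, characterised by $R^{-1}(\{i_0\})=\{(x,x)\mid x\in X\}$. I want to show $\sigma(i_0)=i_0'$, where $i_0'$ is the unit of the target. First I would pick any element $x\in X$; such an element exists because $R$ is a surjection onto $I$ (so in particular $X\neq\emptyset$, as $I\neq\emptyset$, since otherwise a unit could not exist). Then $R(x,x)=i_0$ by the defining property of the unit.

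Next I would apply the commutativity of the defining diagram to the pair $(x,x)$: the square says $\sigma\circ R = Q\circ(f\times f)$, where here $Q=R'$, so $\sigma(R(x,x)) = R'(f(x),f(x))$. The left-hand side is $\sigma(i_0)$. For the right-hand side, note that $(f(x),f(x))$ is a diagonal pair in $X'\times X'$, hence $R'(f(x),f(x))=i_0'$ by the defining property of the unit $i_0'$ of the target. Combining, $\sigma(i_0)=i_0'$, which is exactly the claim.

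There is essentially no obstacle here; the only thing to be slightly careful about is the nonemptiness of $X$, which I would justify as above from the existence of a unit (a unital relation partition has a distinguished $i_0\in I$, forcing $I\neq\emptyset$, and then surjectivity of $R$ forces $X\times X\neq\emptyset$). An even cleaner route, avoiding any case distinction, is to observe directly that $\sigma$ must send the diagonal class to a diagonal class: for \emph{every} $x$ we get $R'(f(x),f(x))=\sigma(i_0)$, so $\sigma(i_0)$ is a value of $R'$ that is constant on all diagonal pairs coming from the image of $f$; but since $i_0'$ is the \emph{unique} index with $R'^{-1}(\{i_0'\})\supseteq$ diagonal, and diagonal pairs always map to $i_0'$, we must have $\sigma(i_0)=i_0'$. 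I would write up the first, direct version as it is the shortest.
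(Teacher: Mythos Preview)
Your argument is correct: picking any $x\in X$ and applying the commutative square to the diagonal pair $(x,x)$ immediately gives $\sigma(i_0)=R'(f(x),f(x))=i_0'$, and your justification that a unital relation partition has $X\neq\emptyset$ is sound. The paper itself omits the proof entirely (it only says ``It is easy to see''), so your direct verification is exactly the intended routine check.
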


\begin{dfn}
For a relation partition $(X, R, I)$ and $i \in I$,
we define the $i$-th relation by
\[
  R^{-1}(i) := R^{-1}(\{i\}) \subseteq X \times X.
\]
The corresponding adjacency matrix $A_i$ is defined by
\[
A_i(x, x') =
\begin{cases}1 & \text{if}\: R(x, x') = i,\\0 & \text{otherwise}.\end{cases}
\]
Let $g:X\longrightarrow\mathbb C$ be a function.
(The choice of $\mathbb C$ is irrelevant,
for our purpose here
the set of non-negative integers ${\mathbb N}_{\geq 0}$ would suffice).
Then, $g$ may be regarded as a complex vector with index $X$.
The multiplication of $A_i$ and $g$ gives a complex vector indexed by $X$ as follows:
		\[(A_i g)(x') = \sum_{x \in X} A_i(x, x') g(x).\]
The standard Hermitian product is defined by
		\[(g, g') = \sum_{x \in X} g(x) \overline{g'(x)}.\]
\end{dfn}

\subsection{Equi-distributed functions}
\begin{dfn}\label{dfn::equidistribution}
Let $(X, R, I)$ be a unital relation partition,
$g\in \mathbb C^X$ a function, and $v = \#X$.
Define the $i$-th inner distribution $\lambda_i$ of $g$
(this terminology comes from \cite{Delsarte})
by
\[
\lambda_i
:= v\cdot\frac{(A_i g, g)}{\#R^{-1}(i)}
= (A_i g, g)/\frac{\#R^{-1}(i)}{v}.
\]
Here, the meaning of the rational number
$\frac{\#R^{-1}(i)}{v}$
is the average of the number of $i$-th relations over $X$,
which is nothing but $k_i$ if the relation partition is regular
(if we consider $R^{-1}(i)$ as a directed graph over the
vertex set $X$, it is the valency).

We say that $g$ is an \textit{equi-distributed} function
if $\lambda_i$ is independent of the choice of $i\not=i_0$.
We call $g$ a $(v, k, \lambda)$--equi-distributed function,
for
$\lambda = \lambda_i$ ($i\not=i_0$)
and
$k=\sum_{x\in X}g(x)$
($k$ being called the \textit{mass} of $g$).
\end{dfn}

\begin{rem} Let $G$ be a finite group. Then $(G, R, G)$ defined by
	\[R:G\times G\longrightarrow G; \quad (g, h)\longmapsto g^{-1}h\]
is a unital regular relation partition.
This $(G, R, G)$ is called a thin $G$-Scheme
(as an association scheme).
Note that every (possibly noncommutative) finite association scheme
is a unital regular relation partition (a basic fact, see \cite{Bannai}),
and a thin $G$-scheme is an example.
\end{rem}

\begin{dfn}
A function
$\chi:X \to \{0,1\}$
is called the characteristic function for the
subset $\chi^{-1}(1)\subseteq X$.
A function
$\chi:X \to {\mathbb N}_{\geq 0}$
is called a multi characteristic function for the
multi subset of $X$, with the multiplicity
$\chi(x)$ for each element $x \in X$.
These functions are
equivalent notions to the subsets
and the multi subsets of $X$, respectively.
\end{dfn}

\begin{dfn}\label{dfn::classicalDiffSet}
 Let $G$ be a finite group. Let $D \subseteq G$ be a subset.
Let $\delta_D$ be the characteristic function of $D$.
If $\delta_D$ is an equi-distributed function
on $(G, R, G)$, then $D$ is said to be a difference set of $G$.
\end{dfn}

\begin{rem}\label{rem::classicalDS}
The condition that $D$ is a difference set in \dfnref{classicalDiffSet} is interpreted as follows.
For any $i\in G$,
$i\not=\id_G$, $(A_i\delta_D, \delta_D) = \lambda_i$
is independent of
$i\not=\id_G$
(note that $\#R^{-1}(i) = v$,
or equivalently
$k_i=1$, for each $i\in I$).
Since
\[
(A_i\delta_D, \delta_D)
= \sum_{(d, d')\in D\times D}A_i(d, d')
= \#\{(d, d')\in D\times D\mid d^{-1}d'
= i\},
\]
the above definition coincides with the classical definitions of difference sets
as in \cite{difference-set}.
\end{rem}

\begin{dfn} (Multi difference sets.)
A multi characteristic function
$\chi:X \to {\mathbb N}_{\geq 0}$
is called a multi difference set
in
$(X,R,I)$
if and only if
$\chi$ is equi-distributed.
Usually, we consider the corresponding
multi subset of $X$ and call it
a multi difference set
in
$(X,R,I)$.
\end{dfn}

\section{Pushout preserves equi-distributed functions}
\label{sec:pushout}
\begin{dfn}\label{def:quotient} (Quotient of relation partitions.)

Let $(X, R, I)$ be a relation partition,
$(Y, Q, J)$ be another.
Let $f:X \longrightarrow Y$ and $\sigma: I \longrightarrow J$
be surjections that give a morphism of relation partition.
(Note that the surjectivity of $f$ implies that of $\sigma$.)
We say that
$(Y, Q, J)$
is a quotient of
$(X, R, I)$ with quotient morphism
$(f,\sigma)$.
\end{dfn}

The following theorem is a key in our algorithmic search for difference sets.
\begin{thm}\label{thm::pushOutPreserves}
Let $(X, R, I)$ be a unital regular relation partition,
$(Y, Q, J)$ be a unital relation partition.
Let $(f, \sigma)$
be a quotient morphism as above.
For a function $g\in\mathbb C^X$, we define its pushout $f_*g\in\mathbb C^Y$ along $f$ by
	\[f_*g(y) := \sum_{x \in f^{-1}(y)} g(x).\]
If $g$ is $(\#X, k, \lambda)$--equi-distributed function,
then $f_*g$ is a $(\#Y, k, (\#X/\#Y)\lambda)$--equi-distributed function.
\end{thm}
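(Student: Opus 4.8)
The plan is to unwind both sides of the equi-distribution condition as Hermitian inner products and show they correspond under $f_\ast$. First I would record the two basic facts about the pushout. The mass is preserved: $\sum_{y\in Y} f_\ast g(y) = \sum_{y}\sum_{x\in f^{-1}(y)} g(x) = \sum_{x\in X} g(x) = k$, since $f$ is surjective and the fibers $f^{-1}(y)$ partition $X$. This gives the claimed mass $k$ for $f_\ast g$. Next, the unit is preserved: by the Proposition on unital relation partitions, $\sigma(i_0) = j_0$, where $i_0, j_0$ are the units of $(X,R,I)$ and $(Y,Q,J)$; so the exceptional class ``$j \neq j_0$'' on the $Y$-side pulls back to classes ``$i \neq i_0$'' on the $X$-side.

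The heart of the argument is a compatibility identity between the adjacency operators across $f$. Fix $j \in J$ and consider $(A_j^{Q} f_\ast g, f_\ast g)$, the inner product computed in $\mathbb{C}^Y$. Expanding, this is $\sum_{(y,y') : Q(y,y')=j} f_\ast g(y)\overline{f_\ast g(y')} = \sum_{(y,y'): Q(y,y')=j}\ \sum_{x\in f^{-1}(y)}\sum_{x'\in f^{-1}(y')} g(x)\overline{g(x')}$. Reindexing by the pairs $(x,x')\in X\times X$ and using that $Q(f(x),f(x')) = \sigma(R(x,x'))$ by commutativity of the morphism square, this equals $\sum_{i\in\sigma^{-1}(j)} \sum_{(x,x'): R(x,x')=i} g(x)\overline{g(x')} = \sum_{i\in\sigma^{-1}(j)} (A_i^{R} g, g)$. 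Thus
\[
(A_j^{Q} f_\ast g, f_\ast g) \;=\; \sum_{i\in\sigma^{-1}(j)} (A_i^{R} g, g).
\]
Now I would translate into inner distributions. For $i \neq i_0$ the hypothesis gives $(A_i^R g, g) = \lambda_i \cdot \#R^{-1}(i)/\#X = \lambda\cdot\#R^{-1}(i)/\#X$ (note $g$ is $(\#X,k,\lambda)$-equi-distributed, so all these $\lambda_i$ equal a common $\lambda$). For $j \neq j_0$, the fiber $\sigma^{-1}(j)$ contains no $i_0$, so every term on the right is of this form, and the displayed identity becomes $(A_j^Q f_\ast g, f_\ast g) = (\lambda/\#X)\sum_{i\in\sigma^{-1}(j)}\#R^{-1}(i) = (\lambda/\#X)\,\#R^{-1}(\sigma^{-1}(j))$. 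But $R^{-1}(\sigma^{-1}(j)) = (f\times f)^{-1}(Q^{-1}(j))$, which is a union of fibers of $f\times f$; since $(X,R,I)$ is regular, I claim each fiber of $f\times f$ over a point of $Q^{-1}(j)$ has the same cardinality — in fact $\#(f\times f)^{-1}(y,y') = \#f^{-1}(y)\cdot\#f^{-1}(y')$, and one uses regularity to see that $\#f^{-1}(y)$ is independent of $y$ (each fiber $f^{-1}(y)$, viewed via $R$, decomposes the same way). Hence $\#R^{-1}(\sigma^{-1}(j)) = (\#X/\#Y)^2\,\#Q^{-1}(j)$, and therefore
\[
\lambda_j(f_\ast g) \;=\; \#Y\cdot\frac{(A_j^Q f_\ast g, f_\ast g)}{\#Q^{-1}(j)} \;=\; \#Y\cdot\frac{(\lambda/\#X)(\#X/\#Y)^2\,\#Q^{-1}(j)}{\#Q^{-1}(j)} \;=\; \frac{\#X}{\#Y}\,\lambda,
\]
which is independent of $j\neq j_0$. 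This is exactly the assertion.

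The main obstacle I anticipate is the counting step establishing $\#f^{-1}(y)$ is constant in $y$, hence $\#R^{-1}(\sigma^{-1}(j)) = (\#X/\#Y)^2 \#Q^{-1}(j)$; this is where regularity of $(X,R,I)$ is genuinely used, and it needs a careful argument rather than the purely formal manipulations above. One clean way: the unit relation $R^{-1}(i_0)$ is the diagonal, so $\sigma^{-1}(j_0) \ni i_0$ and the identity $(A_{j_0}^Q f_\ast g, f_\ast g) = \sum_{i\in\sigma^{-1}(j_0)}(A_i^R g,g)$ already handles the unit class consistently; for the constancy of fiber sizes, apply the regularity valencies $k_i$ of $(X,R,I)$ to count, for fixed $x$, the elements $x'$ with $f(x') = f(x)$ — this is $\sum_{i\in\sigma^{-1}(j_0)} k_i$ when $(Y,Q,J)$'s unit class $j_0$ is pulled back, and being a sum of valencies it is independent of $x$. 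One should double-check that no regularity hypothesis on $(Y,Q,J)$ sneaks in; the statement only assumes $(Y,Q,J)$ unital, and indeed the argument above never uses valencies on the $Y$-side, only the raw cardinalities $\#Q^{-1}(j)$ which exist unconditionally.
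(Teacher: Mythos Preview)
Your proof is correct and follows essentially the same route as the paper: the paper isolates your compatibility identity as Lemma~\ref{lmm::lemmaOfPushOutPreserves1} and your fiber-constancy claim as Lemma~\ref{lmm::lemmaOfPushOutPreserves2}, then combines them exactly as you do. The only cosmetic difference is that, to obtain $\sum_{i\in\sigma^{-1}(j)}\#R^{-1}(i) = (\#X/\#Y)^2\,\#Q^{-1}(j)$, the paper applies the compatibility identity to the constant function $\boldsymbol 1_X$ rather than invoking $(f\times f)^{-1}(Q^{-1}(j))$ directly; the two computations are the same count in different notation.
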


For a proof, we need two lemmas.

\begin{lmm}\label{lmm::lemmaOfPushOutPreserves1}
Let $(X, R, I)$, $(Y, Q, J)$, $f$ and $\sigma$ be as in \thmref{pushOutPreserves}.
Let $A_i$ be the adjacency matrix for $i\in I$, and $B_j$ that for $j\in J$.
Let $g, h\in\mathbb C^X$. Then, it holds that
		\[(B_jf_*g, f_*h) = \sum_{i\in\sigma^{-1}(j)}(A_ig, h).\]
\end{lmm}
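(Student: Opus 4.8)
The plan is to expand both sides into finite double sums indexed by $X\times X$ and match them termwise. First I would unfold the Hermitian product on $Y$: by definition
\[
  (B_j f_*g,\, f_*h) = \sum_{y,y'\in Y} B_j(y,y')\,(f_*g)(y)\,\overline{(f_*h)(y')},
\]
and then substitute the definition of the pushout, using that $B_j$ is real so that $\overline{(f_*h)(y')}=\sum_{x'\in f^{-1}(y')}\overline{h(x')}$. Since the fibers $\{f^{-1}(y)\}_{y\in Y}$ form a partition of $X$ (every $x\in X$ lies in exactly one fiber, namely $f^{-1}(f(x))$), each nested sum $\sum_{y}\sum_{x\in f^{-1}(y)}$ collapses to $\sum_{x\in X}$ with $y=f(x)$, yielding
\[
  (B_j f_*g,\, f_*h) = \sum_{x,x'\in X} B_j\bigl(f(x),f(x')\bigr)\, g(x)\,\overline{h(x')}.
\]

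The second step is to identify the kernel $B_j(f(x),f(x'))$ via the commuting square that defines the morphism $(f,\sigma)$, namely $Q\circ(f\times f)=\sigma\circ R$. Hence $Q(f(x),f(x'))=\sigma(R(x,x'))$, so $B_j(f(x),f(x'))=1$ exactly when $\sigma(R(x,x'))=j$, i.e.\ when $R(x,x')\in\sigma^{-1}(j)$. As $R(x,x')$ is a single element of $I$, this indicator equals $\sum_{i\in\sigma^{-1}(j)}A_i(x,x')$ (a sum with at most one nonzero term). Substituting this and interchanging the two finite sums gives
\[
  (B_j f_*g,\, f_*h) = \sum_{i\in\sigma^{-1}(j)}\sum_{x,x'\in X} A_i(x,x')\, g(x)\,\overline{h(x')} = \sum_{i\in\sigma^{-1}(j)} (A_i g,\, h),
\]
which is the asserted identity.

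The argument is essentially bookkeeping; the only point demanding care is the collapse of the nested fiber sums, which uses that the fibers of $f$ partition $X$ (only totality of $f$ is needed here, not surjectivity), together with correct handling of the complex conjugate in the Hermitian product. I note that regularity and unitality of $(X,R,I)$ play no role in this lemma — they will enter only when the inner distributions $\lambda_i$ are formed, in the proof of \thmref{pushOutPreserves}.
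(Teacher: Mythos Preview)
Your proof is correct. The core identity you use, $B_j(f(x),f(x'))=\sum_{i\in\sigma^{-1}(j)}A_i(x,x')$, is exactly the content of the paper's argument as well; the only organizational difference is that the paper first reduces by bilinearity to the case $g=\delta_x$, $h=\delta_{x'}$ (noting $f_*\delta_x=\delta_{f(x)}$) and then compares the two $\{0,1\}$-valued sides, whereas you keep $g,h$ general and collapse the fiber sums directly. Your observation that neither regularity nor unitality of $(X,R,I)$ is needed for this lemma is also correct.
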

\begin{proof}
For $x\in X$, we define $\delta_x\in\mathbb C^X$ by
\[\delta_x(x') = \begin{cases}1&\text{if}\:x=x'\\0&\text{if}\:x\not=x'.\end{cases}\]
Since $\delta_x$ ($x\in X$) spans $\mathbb C^X$, by bilinearity, it suffices to show that
\begin{align}
			(B_jf_*\delta_x, f_*\delta_{x'}) = \sum_{i\in\sigma^{-1}(j)}(A_i\delta_x, \delta_{x'}). \label{eq::ch_func_ver}
\end{align}
It is direct to show that
\[f_*\delta_x = \delta_{f(x)},\]
and hence the left-hand side of \eqref{ch_func_ver} is nothing but
\begin{align}
(B_j\delta_{f(x)}, \delta_{f(x')}) =
   \begin{cases}1 & \text{if } Q(f(x), f(x')) = j, \\
                0 & \text{otherwise}.\end{cases}
\label{eq::lhs_ch_func_ver}
\end{align}
The commutativity in the definition of morphisms implies that
		\[Q(f(x), f(x')) = \sigma\circ R(x, x').\]
In the right-hand side of \eqref{ch_func_ver}, the summation is at most one, and
\begin{align}
	\sum_{i \in \sigma^{-1}(j)} (A_i\delta_x, \delta_{x'})
=
\begin{cases}1 & \text{if}\: R(x, x') = i,\quad i\in \sigma^{-1}(j)\\0 & \text{otherwise}.
\end{cases}\label{eq::rhs_ch_func_ver}
\end{align}
The comparison of \eqref{lhs_ch_func_ver} and \eqref{rhs_ch_func_ver} shows
\lmmref{lemmaOfPushOutPreserves1}.
\end{proof}

\begin{lmm}\label{lmm::lemmaOfPushOutPreserves2}
Assume that a morphism $(f, \sigma)$ is a quotient morphism
from a regular relation partition $(X, R, I)$ to
a unital relation partition $(Y, Q, J)$.
Then, for every $y\in Y$, $\#f^{-1}(y)= \#X/\#Y$ holds.
\end{lmm}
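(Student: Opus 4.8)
The plan is to exploit the unit of $(Y,Q,J)$ together with the regularity of $(X,R,I)$ and the commuting square defining the morphism $(f,\sigma)$. Let $j_0\in J$ be the unit of $(Y,Q,J)$, so that $Q^{-1}(j_0)=\{(y,y)\mid y\in Y\}$. By \prpref{} — more precisely, the proposition asserting that a morphism of unital relation partitions carries unit to unit — we have $\sigma(i_0)=j_0$, where $i_0$ is the unit of $(X,R,I)$. The key point is then to identify the fibre $f^{-1}(y)\times f^{-1}(y)$ inside the union $\bigcup_{i\in\sigma^{-1}(j_0)}R^{-1}(i)$, and conversely.

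First I would fix $y\in Y$ and count the set $S_y:=\{(x,x')\in X\times X\mid f(x)=f(x')=y\}$ in two ways. On one hand, $\#S_y=(\#f^{-1}(y))^2$. On the other hand, using the commutativity $Q\circ(f\times f)=\sigma\circ R$, a pair $(x,x')$ with $f(x)=f(x')=y$ satisfies $\sigma(R(x,x'))=Q(y,y)=j_0$, hence $R(x,x')\in\sigma^{-1}(j_0)$; that is, $S_y\subseteq\bigcup_{i\in\sigma^{-1}(j_0)}R^{-1}(i)$, and since the sets $R^{-1}(i)$ are disjoint, $\#S_y=\sum_{i\in\sigma^{-1}(j_0)}\#\{(x,x')\in S_y\mid R(x,x')=i\}$. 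Now I would sum $\#S_y$ over all $y\in Y$: the left side gives $\sum_{y\in Y}(\#f^{-1}(y))^2$, while the right side, after noting that every pair $(x,x')$ with $R(x,x')\in\sigma^{-1}(j_0)$ automatically has $f(x)=f(x')$ (again by the commuting square, since $Q(f(x),f(x'))=\sigma(R(x,x'))=j_0$ forces $f(x)=f(x')$), equals $\sum_{i\in\sigma^{-1}(j_0)}\#R^{-1}(i)$. By regularity, $\#R^{-1}(i)=k_i\cdot\#X$, but more simply $\sum_{i\in\sigma^{-1}(j_0)}\#R^{-1}(i)=\#\{(x,x')\mid f(x)=f(x')\}=\sum_{y\in Y}(\#f^{-1}(y))^2$, so this particular sum is a tautology and does not yet pin down the individual fibres.

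To get the uniform fibre size I would instead argue directly with a single relation. For each $x\in X$, regularity gives $\#\{x'\in X\mid R(x,x')=i_0\}=k_{i_0}$; but $R^{-1}(i_0)=\{(x,x)\}$ since $(X,R,I)$ is unital, so $k_{i_0}=1$. The productive move is to count, for a fixed $y\in Y$ and any $x\in f^{-1}(y)$, the number of $x'$ with $f(x')=y$: by the commuting square this is $\#\{x'\mid R(x,x')\in\sigma^{-1}(j_0)\}=\sum_{i\in\sigma^{-1}(j_0)}\#\{x'\mid R(x,x')=i\}=\sum_{i\in\sigma^{-1}(j_0)}k_i$, which is a constant $c$ independent of $x$ (here regularity is essential). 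Hence every fibre $f^{-1}(y)$ that is nonempty has cardinality exactly $c$; since $f$ is surjective every fibre is nonempty, so all fibres have size $c$, and therefore $c=\#X/\#Y$. The main obstacle is making sure the ``collapse'' direction is handled cleanly — i.e.\ that $R(x,x')\in\sigma^{-1}(j_0)$ really does force $f(x)=f(x')$ and not merely the weaker $Q(f(x),f(x'))=j_0$; but this is immediate from the fact that $j_0$ is the unit of $(Y,Q,J)$, so $Q(f(x),f(x'))=j_0$ if and only if $f(x)=f(x')$. With that observation in hand the argument is just bookkeeping.
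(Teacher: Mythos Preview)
Your final argument is correct and is essentially the paper's proof: fix $x_0\in f^{-1}(y)$, observe via the commuting square that $f^{-1}(y)=\{x'\mid R(x',x_0)\in\sigma^{-1}(j_0)\}$, and use regularity to conclude that this set has size $\sum_{i\in\sigma^{-1}(j_0)}k_i$, a constant independent of $y$. Two minor remarks: the double-counting paragraph is, as you yourself note, a tautology and can be dropped; and the lemma only assumes $(X,R,I)$ is \emph{regular}, not unital, so the appeals to a unit $i_0$ of $(X,R,I)$ and to the unit-preservation proposition are not justified here --- fortunately they are also unused in the part of your argument that actually works.
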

\begin{proof}
For $y\in Y$, take $x_0\in X$ such that $f(x_0) = y$.
It follows that
\begin{align*}
			x\in f^{-1}(y)&\Longleftrightarrow f(x) = y
			               \Longleftrightarrow Q(f(x), y) = j_0
			               \Longleftrightarrow Q(f(x), f(x_0)) = j_0\\
			              &\Longleftrightarrow R(x, x_0)\in\sigma^{-1}(j_0)
\end{align*}
holds. Thus, $\#f^{-1}(y)$ is the summation of valencies of $R_i$, $i\in\sigma^{-1}(j_0)$,
which is independent of the choice of $y$, since every $R_i$ is regular.
\end{proof}

\begin{proof}[Proof of \thmref{pushOutPreserves}]
Suppose that $g\in\mathbb C^X$ is a $(v, k, \lambda)$--equi-distributed function.
Then, for $i\not=i_0$, we have
	\[\lambda = (A_ig, g)\frac{v}{\#R^{-1}(i)}.\]
By \lmmref{lemmaOfPushOutPreserves1},
	\begin{align}
		(B_jf_*g, f_*g)
				&= \sum_{i\in\sigma^{-1}(j)}(A_ig, g) = \sum_{i\in\sigma^{-1}(j)}\lambda\frac{\#R^{-1}(i)}{v} \notag\\
				&= \frac{\lambda}{v}\sum_{i\in\sigma^{-1}(j)}\#R^{-1}(i)\quad (\text{for}\: j\not=j_0)
      \label{eq::f_*gconponentOfB}
	\end{align}
On the other hand,
if $\boldsymbol 1_X\in\mathbb C^X$ denotes
the constant function with value one,
we have by \lmmref{lemmaOfPushOutPreserves1}
\begin{align}
	(B_jf_\ast\boldsymbol 1_X, f_*\boldsymbol 1_X)
    =
    \sum_{i\in\sigma^{-1}(j)}(A_i\boldsymbol 1_X, \boldsymbol 1_X).\label{eq::f_*1_XconponentOfB}
\end{align}
Put $e:=\#Y$.
By \lmmref{lemmaOfPushOutPreserves2},
$f_*\boldsymbol 1_X\in\mathbb C^Y$ is
the constant function with value $v/e$,
and hence
the left-hand side of \eqref{f_*1_XconponentOfB} is
\begin{align}
	\left(\frac{v}{e}\right)^2\#Q^{-1}(j).
\label{eq::lhs_f_*1_XconponentOfB}
\end{align}
The right-hand side of \eqref{f_*1_XconponentOfB} is
\begin{align}
	\sum_{i\in\sigma^{-1}(j)}\#R^{-1}(i).\label{eq::rhs_f_*1_XconponentOfB}
\end{align}
We divide \eqref{f_*gconponentOfB}
by $\eqref{lhs_f_*1_XconponentOfB} = \eqref{rhs_f_*1_XconponentOfB}$
to obtain
\[
   \left(\frac{e}{v}\right)^2\frac{(B_jf_* g, f_* g)}{\#Q^{-1}(j)}
    =
   \frac{\lambda}{v}\quad \text{for}\: j\not=j_0.
\]
Thus we have
		\[e\frac{(B_jf_*g, f_*g)}{\#Q^{-1}(j)} = \frac{v\lambda}{e}.\]
This proves that the $j$-th inner distribution
for $j\not=j_0$ in $(Y, Q, J)$ is constant $v\lambda/e$,
see \dfnref{equidistribution}.
As for the mass of $f_*(g)$, it is equal to
\[\sum_{y\in Y}f_*g(y) = \sum_{x\in X}g(x) = k.\]
Thus, $f_*g$ is an equidistributed function
with parameter $(e, k, v\lambda/e)$.
\end{proof}

\section{An algorithm to search for difference sets}
\label{sec:algorithm}
\subsection{Sequence of quotient morphisms}
Recall that
for a unital regular relation partition $(X, R, I)$,
an equi-distributed function $g \in \mathbb C^X$
is said to be multi characteristic
if the values of $g$
lie in the non-negative integers $\mathbb N_{\geq 0}$.
The corresponding multi subset of $X$ is called
a multi difference set in $(X, R, I)$
(called a difference sum in Peifer \cite{Peifer}
and a quotient image in Becker \cite{DIFF-SET-BECKER}
in the case of
thin schemes).
The following is a direct consequence of Theorem~\ref{thm::pushOutPreserves}.


\begin{lmm}\label{lmm::differenece_tables}
Let $(X, R, I)$ be a unital regular relation partition,
$(Y, Q, J)$ a unital relation partition,
and $(f, \sigma): (X, R, I)\longrightarrow (Y, Q, J)$
a quotient morphism.
Put $v := \#X$ and $e:=\#Y$.
To enumerate all $(v, k, \lambda)$--equi-distributed multi characteristic functions
on $(X, R, I)$,
it suffices to enumerate
all the $(e, k, (v/e)\lambda)$--equi-distributed multi characteristic functions $h$ on $(Y, Q, J)$,
and list their lifts to $(X, R, I)$
(i.e., $g\in\mathbb C^X$ that is multi characteristic and $f_*g = h$),
and check whether $g$ is equi-distributed or not.
Note that the number of
such $g$ is finite because of the nonnegativity,
and hence an exhaustive search is possible.
Also, if $g$ is a characteristic function of
a difference set, then
the value of $h$ is bounded by $v/e$, which
gives a tight condition in the search.
\end{lmm}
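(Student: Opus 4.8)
The plan is to read this off Theorem~\ref{thm::pushOutPreserves}, viewing the statement as an assertion about the \emph{completeness} and \emph{termination} of the naive ``lift-and-filter'' search along $(f,\sigma)$.

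First I would establish completeness. Let $g\in\mathbb C^X$ be any $(v,k,\lambda)$--equi-distributed multi characteristic function, so $g(x)\in\mathbb N_{\geq 0}$ for all $x$, and set $h:=f_*g$. Since $h(y)=\sum_{x\in f^{-1}(y)}g(x)$ is a finite sum of nonnegative integers, $h$ is again multi characteristic; by \thmref{pushOutPreserves} it is $(e,k,(v/e)\lambda)$--equi-distributed on $(Y,Q,J)$. Hence every $g$ we wish to enumerate occurs among the lifts of some $h$ produced in the first step, and after the third step discards those lifts that fail to be equi-distributed, exactly the sought-for set of $g$'s remains. (The reverse inclusion is immediate from the construction and is not needed.)

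Next I would check finiteness of the set of lifts of a fixed $h$. If $g$ is multi characteristic with $f_*g=h$, then for each $y\in Y$ and each $x\in f^{-1}(y)$ we have $0\leq g(x)\leq\sum_{x'\in f^{-1}(y)}g(x')=h(y)$; by \lmmref{lemmaOfPushOutPreserves2} each fiber $f^{-1}(y)$ has exactly $v/e$ elements, so there are at most $\prod_{y\in Y}(h(y)+1)^{v/e}$ such $g$, a finite number, and the search terminates. For the last assertion, if $g=\delta_D$ is the characteristic function of a difference set then $g(x)\in\{0,1\}$, so $h(y)=\sum_{x\in f^{-1}(y)}g(x)\leq\#f^{-1}(y)=v/e$ again by \lmmref{lemmaOfPushOutPreserves2}, restricting the enumeration to $h$ with values in $\{0,1,\dots,v/e\}$. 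The only substantive input is \thmref{pushOutPreserves}; the remaining work is bookkeeping, and the sole points requiring care --- that the pushout preserves nonnegativity and integrality, and that the fibers are equinumerous --- are respectively immediate and supplied by \lmmref{lemmaOfPushOutPreserves2}, so I do not anticipate a genuine obstacle here.
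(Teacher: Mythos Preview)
Your argument is correct and matches the paper's own treatment: the paper simply declares the lemma ``a direct consequence of Theorem~\ref{thm::pushOutPreserves},'' and you have written out exactly that deduction, together with the straightforward bookkeeping (nonnegativity/integrality of $f_*g$, and the fiber-size bound via \lmmref{lemmaOfPushOutPreserves2}) that the paper leaves implicit. There is nothing to add.
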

This lemma gives an efficient enumeration of difference (multi) sets:
we don't show that, but according to computer experiments,
it is much easier to enumerate difference multi sets
in a quotient than to enumerate in the original
unital regular relation partition, since the number of
variables is small.
In addition, the search in $X$ is divided to
the search in $X$ and the search of lifts,
from which we expect the improvement in efficiency
by ``divide-and-conquer'' principle.

\begin{lmm}[Schurian scheme]\label{lmm::SchurianScheme}
Let $G$ be a finite group, and $H$ its subgroup.
Then the pair $X\coloneqq G/H$, $I\coloneqq H\backslash G/H$ and
\[
R: G/H\times G/H\longrightarrow H\backslash G/H = I;\quad (g_1H, g_2H)\longmapsto Hg^{-1}_1g_2H
\]
give an association scheme (called a Schurian scheme), and hence a unital regular relation partition.
If $K < H$ is a subgroup of $H$, then the pair
\[
G/K\longrightarrow G/H,\quad K\backslash G/K\longrightarrow H\backslash G/H
\]
gives a quotient morphism of association schemes (and hence of unital regular relation partitions).
\end{lmm}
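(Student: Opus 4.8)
The plan is to dispatch the two assertions separately; both are direct bookkeeping with double cosets, together with the transitivity of the left translation action of $G$ on $G/H$. For the first assertion I would begin with well-definedness of $R$: if $g_1H=g_1'H$ and $g_2H=g_2'H$, write $g_1'=g_1h_1$, $g_2'=g_2h_2$ with $h_1,h_2\in H$; then $(g_1')^{-1}g_2'=h_1^{-1}(g_1^{-1}g_2)h_2\in Hg_1^{-1}g_2H$, so $R$ descends to double cosets, and it is surjective since $R(H,gH)=HgH$. The structural point I would then isolate is that, for the diagonal action $g_0\cdot(g_1H,g_2H)=(g_0g_1H,g_0g_2H)$ of $G$ on $X\times X$, every fiber $R^{-1}(HgH)$ is exactly one $G$-orbit, that of $(H,gH)$: a pair $(g_1H,g_2H)$ lies in this orbit iff the coset $g_1^{-1}g_2H$ is one of the left $H$-cosets comprising $HgH$, i.e.\ iff $g_1^{-1}g_2\in HgH$. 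In particular each $g_0\in G$ acts on $(X,R,I)$ as a relation-partition automorphism, and $G$ acts transitively on $X$.

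From this, the remaining properties follow formally. Unitality: $HeH=H$ is a unit, since $R(g_1H,g_2H)=H$ iff $g_1^{-1}g_2\in H$ iff $g_1H=g_2H$, so $R^{-1}(H)$ is the diagonal. Regularity: by transitivity of $G$ on $X$, both $\#\{x'\mid R(x,x')=i\}$ and $\#\{x'\mid R(x',x)=i\}$ are independent of $x\in X$; evaluating at $x=H$ for $i=HgH$ gives $\#(HgH)/\#H$ and $\#(Hg^{-1}H)/\#H$ respectively (both integers, as a double coset is a union of left $H$-cosets), and these coincide because the inversion map $x\mapsto x^{-1}$ of $G$ restricts to a bijection $HgH\to Hg^{-1}H$, using $(HgH)^{-1}=Hg^{-1}H$. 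Finally, to see that $(X,R,I)$ is a genuine association scheme, one checks that the transpose of $R^{-1}(HgH)$ equals $R^{-1}(Hg^{-1}H)$ (again via inversion) and that each intersection number $\#\{z\mid R(x,z)=i,\ R(z,y)=j\}$, for $(x,y)$ with $R(x,y)=k$, depends only on the $G$-orbit of $(x,y)$ in $X\times X$ and hence only on $k$; this is the classical fact that a transitive permutation group yields an association scheme, which I would either reproduce by this short orbit-counting argument or simply cite.

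For the second assertion, the first part already shows that $(G/K,\ R',\ K\backslash G/K)$ with $R'(g_1K,g_2K)=Kg_1^{-1}g_2K$ is a Schurian scheme. Since $K\subseteq H$, the projection $f:G/K\to G/H$, $gK\mapsto gH$, and the map $\sigma:K\backslash G/K\to H\backslash G/H$, $KgK\mapsto HgH$, are both well defined and surjective, and they make the defining square commute because $\sigma(R'(g_1K,g_2K))=\sigma(Kg_1^{-1}g_2K)=Hg_1^{-1}g_2H=R(g_1H,g_2H)=R(f(g_1K),f(g_2K))$. Hence $(f,\sigma)$ is a morphism whose two components are surjections, i.e.\ a quotient morphism in the sense of Definition~\ref{def:quotient}.

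I do not anticipate a serious obstacle: the only step needing care is the ``hence an association scheme'' clause, where one must recall why the intersection numbers of an orbit (coherent) configuration are constant; the verifications of well-definedness, unitality, regularity, and the morphism property are all routine.
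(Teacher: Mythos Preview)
Your argument is correct: the well-definedness check, the identification of $R^{-1}(i)$ with a diagonal $G$-orbit on $X\times X$, the derivation of unitality and regularity (including the equality $\#(HgH)=\#(Hg^{-1}H)$ via inversion), and the commutativity of the square for $(f,\sigma)$ are all valid and complete. The only place where you defer to outside knowledge, the existence of intersection numbers for an orbit configuration of a transitive action, is exactly the classical fact one would cite, and your one-line orbit-counting justification is the standard one.

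As for comparison: the paper gives no proof of this lemma at all, treating it as a standard fact about Schurian schemes (with an implicit pointer to \cite{Bannai}). So there is nothing to compare; your write-up simply supplies the routine verification that the paper omits.
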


\begin{prp}\label{prp::equi_tables}
Let $G$ be a finite group.
Let $G \gneqq H_1 \gneqq H_2 \gneqq \cdots \gneqq H_n = \{\id\}$ be
a decreasing sequence of finite subgroups.
By \lmmref{SchurianScheme},
we have a sequence of quotient morphisms of unital regular relation partitions
\begin{align*}
	G = G/H_n&\longrightarrow& G/H_{n-1} &\longrightarrow \cdots\longrightarrow& G/H_1&\longrightarrow& G/G\\
	I = H_n\backslash G/H_n &\longrightarrow& H_{n-1}\backslash G/H_{n-1} &
       \longrightarrow \cdots\longrightarrow& H_1\backslash G/H_1 &\longrightarrow& G\backslash G/G.
\end{align*}
By \lmmref{differenece_tables},
We enumerate equi-distributed multi characteristic functions on $G/G$,
and then $G/H_1$, $G/H_2$, $\cdots$ so on, and finally on $G$.
Thus, we can enumerate all the difference set on $G$,
inductively from $G/G$, $G/H_1$, $\ldots$, $G/H_n$, in order.
\end{prp}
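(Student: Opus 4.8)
The plan is to iterate \lmmref{differenece_tables} along the displayed chain, so that the proposition reduces to checking that the set-up is compatible and that every stage is a finite computation. Write $H_0 := G$, so the chain reads $H_0 \gneqq H_1 \gneqq \cdots \gneqq H_n = \{\id\}$ and the sequence of quotients is $G/H_n \to G/H_{n-1} \to \cdots \to G/H_1 \to G/H_0$. The first thing I would verify is that each consecutive map is a \emph{quotient morphism} of unital regular relation partitions: this is exactly \lmmref{SchurianScheme} applied with $H = H_{i-1}$ and $K = H_i$ (legitimate since $H_i \lneqq H_{i-1}$), which simultaneously certifies that every $G/H_i$ is a Schurian scheme, hence a unital regular relation partition.

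Next I would fix the mass $k$ and the intended value $\lambda$, put $v := \#G$, and for $0 \leq i \leq n$ set $v_i := [G:H_i] = \#(G/H_i)$ and $\mu_i := (\#H_i)\,\lambda$. The only computation here is the pair of identities $v_i/v_{i-1} = [H_{i-1}:H_i]$ and $\mu_{i-1} = [H_{i-1}:H_i]\,\mu_i$, which combine to $(v_i/v_{i-1})\,\mu_i = \mu_{i-1}$; this is the compatibility that makes the induction close. Note also $v_0 = 1$, $\mu_0 = (\#G)\lambda$, while $v_n = v$ and $\mu_n = \lambda$.

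The induction then proves, for $i = 0, 1, \ldots, n$ in order, that the finite set $L_i$ of $(v_i, k, \mu_i)$--equi-distributed multi characteristic functions on $G/H_i$ can be listed. In the base case $i = 0$, the scheme $G/G$ is a single point with only the unit relation, the equi-distribution condition is vacuous, and a multi characteristic function there is just a nonnegative integer equal to its mass; hence $L_0$ consists of the single constant function $k$. For the step $i-1 \Rightarrow i$, I would apply \lmmref{differenece_tables} to the quotient morphism $G/H_i \to G/H_{i-1}$ with target parameter $(v_i, k, \mu_i)$ on $G/H_i$: because $(v_i/v_{i-1})\mu_i = \mu_{i-1}$, the lemma says it suffices, for each $h \in L_{i-1}$, to run over the finitely many multi characteristic lifts $g$ of $h$ to $G/H_i$ (finitely many since $g \geq 0$ and $\sum g = k$), retain those that are equi-distributed, and collect the results as $L_i$; by \thmref{pushOutPreserves} nothing is lost, since every member of $L_i$ pushes forward into $L_{i-1}$. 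At $i = n$ this yields $L_n$, the complete list of $(v, k, \lambda)$--equi-distributed multi characteristic functions on the thin scheme $(G, R, G)$, i.e.\ of multi difference sets with this parameter; restricting to $\{0,1\}$-valued functions and invoking \remref{classicalDS} gives exactly the difference sets of $G$ with parameter $(v,k,\lambda)$, and letting $k$ range over the finitely many values permitted by $k(k-1) = \lambda(v-1)$ gives all difference sets on $G$.

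I do not expect a genuine obstacle: the argument is a bookkeeping-heavy but routine iteration of \lmmref{differenece_tables}. The one place that needs care is precisely the parameter compatibility $\mu_{i-1} = (v_i/v_{i-1})\mu_i$ — one must track the factor $\#H_i$ inside $\mu_i$ rather than a single fixed $\lambda$, so that the parameter the lemma produces on $G/H_{i-1}$ is the very one enumerated at the previous stage — together with the degenerate base case where equi-distribution on the one-point scheme is vacuous. Finiteness at every stage (so that ``enumerate'' is meaningful) is immediate from nonnegativity and the preservation of the mass $k$ under both pushout and lifting.
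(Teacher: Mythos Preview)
Your proposal is correct and follows exactly the route the paper intends: the proposition is stated in the paper as a direct, self-contained consequence of \lmmref{SchurianScheme} and iterated application of \lmmref{differenece_tables}, with no separate proof given. Your write-up simply unpacks this, making explicit the parameter bookkeeping $\mu_i = (\#H_i)\lambda$ and the degenerate base case on $G/G$, both of which the paper leaves implicit; nothing in your argument departs from the paper's approach.
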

Peifer could use only normal subgroups, because he used
only quotient groups, not general Schurian schemes.
This is a big difference, because, for example,
the symmetric group $S_5$ has only one nontrivial normal subgroup $A_5$,
and thus, there is no route other than $S_5>A_5>\{\id\}$, and then
the first step is intractable even in modern computers.
One more remark is that, when $H<G$ is not normal, the following
theorem makes the enumeration down to a linear programming
in the non-negative integer lattice,
with a positive definite quadratic constraint:
\begin{thm} (Variance theorem.)
\label{thm:variance}
Let $(X, R, I)$ be a unital regular relation partition,
and $g \in \mathbb C^X$ be an equi-distributed function
with a parameter set $(v, k, \lambda)$.
Then,
\[
(\sum_{x\in X} g(x))(\overline{\sum_{x\in X} g(x)})
-
\sum_{x\in X} g(x)\overline{g(x)}
=\lambda(v-1)
\]
holds.
Thus, if $g(x)$ is real, the unbiased variance of $g(x)$
is equal to $\lambda$.
\end{thm}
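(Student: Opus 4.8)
The plan is to evaluate $\sum_{i\in I}(A_i g, g)$ in two ways. On the one hand, since $R$ is a function its fibres $R^{-1}(i)$, $i\in I$, partition $X\times X$, so $\sum_{i\in I}A_i$ is the all-ones matrix, and unwinding the definitions of $A_i$ and of the Hermitian product gives
\[
\sum_{i\in I}(A_i g, g)=\sum_{(x,x')\in X\times X}g(x)\overline{g(x')}
=\left(\sum_{x\in X}g(x)\right)\overline{\left(\sum_{x\in X}g(x)\right)} .
\]
This is precisely the first term on the left-hand side of the asserted identity, and the step uses nothing beyond the definitions in the excerpt.

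On the other hand, I would split off the unit $i_0$. Since $(X,R,I)$ is unital, $R^{-1}(i_0)=\{(x,x)\mid x\in X\}$, so $A_{i_0}$ is the identity matrix and $(A_{i_0}g,g)=(g,g)=\sum_{x\in X}g(x)\overline{g(x)}$, i.e.\ the subtracted term of the identity. For each $i\neq i_0$, equi-distribution (\dfnref{equidistribution}) gives $(A_i g,g)=\lambda\cdot\#R^{-1}(i)/v$. Since the $R^{-1}(i)$ partition $X\times X$ and $\#R^{-1}(i_0)=v$, we get $\sum_{i\neq i_0}\#R^{-1}(i)=v^2-v$, whence
\[
\sum_{i\in I}(A_i g,g)=(g,g)+\frac{\lambda}{v}\bigl(v^2-v\bigr)=(g,g)+\lambda(v-1).
\]
Equating the two evaluations and moving $(g,g)$ to the other side yields the identity. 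For the closing statement, take $g$ real-valued, so that the left-hand side becomes $\bigl(\sum_{x}g(x)\bigr)^2-\sum_{x}g(x)^2$; dividing the identity by $v-1$ then exhibits $\lambda$ as the asserted unbiased variance of the values $g(x)$.

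I do not expect a genuine obstacle: once one observes that $\sum_{i\in I}A_i$ is the all-ones matrix, this is a two-line double count. The only points requiring attention are (i) treating the unit relation $i_0$ separately, since equi-distribution constrains only $\lambda_i$ for $i\neq i_0$, and (ii) the bookkeeping identity $\sum_{i\in I}\#R^{-1}(i)=(\#X)^2$, which is exactly where the fact that $R$ is a function (so that its fibres partition $X\times X$) is used. Note that regularity of $(X,R,I)$ plays no role in the argument; unitality together with equi-distribution already suffices.
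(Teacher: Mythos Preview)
Your proof is correct and follows essentially the same approach as the paper: decompose the all-ones matrix as $\sum_{i\in I}A_i$, apply the Hermitian form $(\,\cdot\, g,g)$, separate the unit term $i_0$, and use equi-distribution together with $\sum_{i\neq i_0}\#R^{-1}(i)=v^2-v$. Your closing remark that regularity is not actually needed here is a valid observation not made explicit in the paper.
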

\begin{proof}
Let $J$ denote the $X\times X$-matrix whose components are all one.
Then we have
\[
J=A_{i_0}+\sum_{i \in I\setminus \{i_0\}} R_i.
\]
Application of the Hermitian inner product,
namely
$(g, (-)g)$ to this decomposition of a matrix, gives
an equality
\[
(\sum_{x\in X} g(x))(\overline{\sum_{x\in X} g(x)})
=
\sum_{x\in X} g(x)\overline{g(x)}
+
\sum_{i \in I\setminus \{i_0\}} (R_ig, g).
\]
Because $g$ is an equi-distributed function,
each term in the second sigma at the right equals
$$
\lambda \#R(i)/v
$$
by definition. Since
$$
\sum_{i\in I\setminus \{i_0\}} \#R(i)=v^2-v=v(v-1),
$$
we have the result.
\end{proof}
Note that in the case where $g$ is a multi characteristic function
and the parameters $(v,k,\lambda)$ are given, then,
the above quadratic equality is a strong number-theoretic constraint.
For the case of $g$ being a characteristic function,
this is equivalent to the well-known strong equality for
difference sets:
\[
k(k-1)=\lambda(v-1).
\]
Also remark that,
in the case of the quotient thin group scheme
$G/N$, the condition degenerates to (by putting
$n:=\#N$ and $\#G=nv$ where $v=\#(G/N)$):
\[
k^2-k=(n\lambda)(nv-1),
\]
which trivially follows
from the original constraint
for the existence of a difference set in $G$
with the given parameter. However, for a non-normal
group $H$, this quadratic constraint
on the existence of equi-distributed functions
in $G/H$ with the given parameters is strong
so that an exhaustive search of $g$ with this condition
is often possible and gives an effective cutting branches.
\begin{rem}
It is not obvious that a direct search of difference sets on $G$ is more time-consuming
than inductive search stated in \prpref{equi_tables}.
However, our experiments show that the inductive search
is much more effective.
We again remark that there are preceding researches
by Bruck~\cite{difference-set},
which consider only one-step normal quotient,
and by Peifer~\cite{Peifer},
where the quotient $G/H_i$ is taken only for normal subgroups $H_i$ of $G$,
since these researches use only thin group schemes
(and hence require a chief series),
while we utilize non-normal subgroups (i.e., Schurian association schemes).
Peifer's algorithm works well, but our experiments show that our usage of generalizations
(i.e., unital regular relation partitions)
dramatically accelerates the search,
when there are many more subgroups
than normal ones.
\end{rem}
We said ``integer Linear Programming with quadratic constraints,''
since after fixing an equi-distibuted
function $g$ in $G/H_{i}$, its preimages $g'$
in $G/H_{i+1}$ are enumerated by conditions on
some partial summations of $g'$ decided by $g$,
which is an integer linear programming, and
the above theorem (and other conditions for
being an equi-distributed function) are
quadratic.

We might appeal to simultaneous equations
by enumerating two distinct quotients $G/H$
and $G/H'$, but to eliminate the possibility
of the existence of $(120, 35, 10)$-difference set,
we did not need it.

\subsection{Reducing candidates by equivalence}
As a final comment on computations:
it is much more effective to decrease the
list of multi difference sets by taking
the representatives from its equivalence classes,
as in \cite[Definition 7]{Peifer}.
We explain the notion of the equivalence in our case,
which is essential from the viewpoint of
computation time: for example,
for $G_1$ and $G_3$ (see the next subsection), the number of
candidate multi difference sets
in $G/H_2$ is 45, which is reduced to 2
by taking representatives.
The improvement would be more dramatic,
for the cases with larger value of $MD_2$
(we couldn't continue the enumeration
without taking the representatives
in such cases as stated in the next subsection).

\begin{lmm}
For any subgroup $H<G$ and any
equi-distributed function $g:G/H \to \mathbb C$,
an automorphism
$\phi$ of $G$ fixing $H$ as a subset
and an element $x \in G$,
the function
$g(x\cdot (\phi(-))):G/H \to \mathbb C$
is again an equi-distributed function,
which is said to be an
\textit{equivalent}
equi-distributed
function to $g$.
(Note that this equivalence depends
on $G>H$, not only on $G/H$.)
\end{lmm}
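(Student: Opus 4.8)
The plan is to recognize the prescribed reshuffling of $G/H$ as an \emph{automorphism} of the Schurian association scheme attached to $G>H$ (as produced in \lmmref{SchurianScheme}), and then to deduce the assertion from \thmref{pushOutPreserves} applied to the inverse automorphism, using that pushout along a bijective quotient morphism is nothing but precomposition with the inverse.

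First I would fix notation and make the maps precise. Write $X:=G/H$, $I:=H\backslash G/H$, and let $(X,R,I)$ be the Schurian scheme of \lmmref{SchurianScheme}, on which $g$ is equi-distributed, say with parameter set $(v,k,\lambda)$ where $v=\#X$. Since $\phi$ is an automorphism of $G$ with $\phi(H)=H$, the assignment $yH\mapsto\phi(y)H$ is a well-defined bijection of $X$ (its inverse is the one induced by $\phi^{-1}$, which also fixes $H$), and left translation $yH\mapsto xyH$ is a bijection of $X$; let $\psi\colon X\to X$ be the composite $\psi(yH)=x\phi(y)H$, so that the function in the statement is $g\circ\psi$. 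Similarly $\phi(H)=H$ makes $\tau\colon HgH\mapsto H\phi(g)H$ a well-defined bijection of $I$.

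Second, and this is the only step requiring genuine attention, I would check that $(\psi,\tau)$ is a morphism of relation partitions $(X,R,I)\to(X,R,I)$, hence, being a pair of bijections, an isomorphism; consequently $(\psi^{-1},\tau^{-1})$ is again a morphism, and since $\psi^{-1}$ is surjective it is a quotient morphism. The computation is just the commuting square: for $y_1,y_2\in G$,
\[
R(\psi(y_1H),\psi(y_2H))=H(x\phi(y_1))^{-1}(x\phi(y_2))H=H\phi(y_1)^{-1}\phi(y_2)H=H\phi(y_1^{-1}y_2)H=\tau(Hy_1^{-1}y_2H),
\]
where the two copies of $x$ cancel inside the double coset and $\phi(H)=H$ is used in the last two equalities; thus $R(\psi(y_1H),\psi(y_2H))=\tau(R(y_1H,y_2H))$. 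The possible pitfalls here are purely bookkeeping: the well-definedness of $\psi$ and $\tau$ and the cancellation of $x$ in $H(x\phi(y_1))^{-1}(x\phi(y_2))H$ — there is no deeper obstacle. Granting this, I would observe that for the bijection $\psi$ one has $(\psi^{-1})_*g=g\circ\psi$, because the fibre of $\psi^{-1}$ over $y$ is the single point $\psi(y)$; applying \thmref{pushOutPreserves} to the quotient morphism $(\psi^{-1},\tau^{-1})\colon(X,R,I)\to(X,R,I)$, whose source is unital regular (a Schurian scheme) and whose target is unital, we conclude that $g\circ\psi=(\psi^{-1})_*g$ is $(\#X,k,(\#X/\#X)\lambda)=(v,k,\lambda)$-equi-distributed, i.e.\ it is an equi-distributed function with the same parameter set as $g$, which is what was claimed.

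If one prefers to avoid invoking \thmref{pushOutPreserves}, the same conclusion follows by a direct computation: the commuting square gives $A_j(\psi^{-1}(z),\psi^{-1}(z'))=A_{\tau(j)}(z,z')$, so after the substitution $z=\psi(y)$, $z'=\psi(y')$ in the Hermitian sum one gets $(A_j\,g\!\circ\!\psi,\,g\!\circ\!\psi)=(A_{\tau(j)}g,g)$, and likewise $\#R^{-1}(j)=\#R^{-1}(\tau(j))$; hence the $j$-th inner distribution of $g\circ\psi$ equals the $\tau(j)$-th inner distribution of $g$, and since $\tau$ fixes the unit class $i_0$ and permutes $I\setminus\{i_0\}$, this common value is the constant $\lambda$ for all $j\neq i_0$, while the mass is visibly $k$.
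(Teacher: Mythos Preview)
Your proof is correct and follows essentially the same idea as the paper's: both arguments recognize the map $yH\mapsto x\phi(y)H$ as an automorphism of the Schurian relation partition on $G/H$ (with the induced bijection on $H\backslash G/H$, trivial on the $x$-part), and conclude that equi-distribution, being a structural invariant, is preserved. The paper's proof is terser---it treats $\phi$ and the left translation by $x$ separately and simply asserts that equi-distribution ``depends only on the structure of relation partitions''---whereas you make the automorphism explicit and formalize the invariance either via \thmref{pushOutPreserves} applied to a bijective quotient (a slight overkill, but valid) or via the direct inner-distribution computation, which is exactly the content behind the paper's one-line assertion.
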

\begin{proof}
It is clear that $\phi$ induces
an equivalent action on
a morphism $G \to G/H$ of relation partitions,
and since the property
``equi-distribution'' depends only on
the structure of relation partitions,
it is preserved by $\phi$.
Since the left translation by $x$
is an equivariant action on the map $R$
(with a trivial action on $I$),
the property is preserved by $x\cdot (-)$.
\end{proof}
\begin{cor}
Let $G>H$ be as above.
Suppose that $h_1$ and $h_2$ are equivalent
equi-distributed functions
in $G/H$.
Then if $g_1$ is an equi-distributed function
on $G$ whose pushout is $h_1$,
there exists an equi-distributed function
$g_2$ on $G$ whose pushout is $h_2$
such that $g_1$ and $g_2$ are equivalent.
\end{cor}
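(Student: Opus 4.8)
The plan is to lift the equivalence datum from $G/H$ up to $G$ and then use naturality of the pushout. Write the equivalence $h_1\sim h_2$ as $h_2=h_1\bigl(x\cdot\phi(-)\bigr)$ for some $x\in G$ and some automorphism $\phi$ of $G$ with $\phi(H)=H$. Define the bijection $\psi\colon G\to G$ by $\psi(y):=x\,\phi(y)$. Since $\phi(H)=H$, we have $\psi(yH)=x\,\phi(y)\,\phi(H)=x\,\phi(y)H$, so $\psi$ descends to the bijection $\bar\psi\colon G/H\to G/H$, $\bar\psi(yH):=x\,\phi(y)H$, and the quotient map $f\colon G\to G/H$ satisfies $f\circ\psi=\bar\psi\circ f$. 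Note that $\bar\psi$ is exactly the operation $yH\mapsto x\cdot\phi(yH)$ defining the equivalence of $h_1$ and $h_2$, so $h_2=h_1\circ\bar\psi$.

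Next I would establish the one non-formal point, namely that the pushout commutes with $\psi$. Because $\bar\psi$ is invertible and likewise $f\circ\psi^{-1}=\bar\psi^{-1}\circ f$, the map $\psi$ restricts to a bijection from $f^{-1}(yH)$ onto $f^{-1}(\bar\psi(yH))$ for every $yH\in G/H$. Hence for any $g\in\mathbb C^{G}$ and any $yH$,
\[
f_*(g\circ\psi)(yH)=\sum_{y'\in f^{-1}(yH)}g(\psi(y'))=\sum_{z\in f^{-1}(\bar\psi(yH))}g(z)=(f_*g)\bigl(\bar\psi(yH)\bigr),
\]
that is, $f_*(g\circ\psi)=(f_*g)\circ\bar\psi$.

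Finally, set $g_2:=g_1\circ\psi=g_1\bigl(x\cdot\phi(-)\bigr)$. By construction $g_2$ is equivalent to $g_1$ as a function on $G=G/\{\id\}$ (with respect to $G>\{\id\}$), and applying the preceding Lemma with the subgroup $\{\id\}<G$ in place of $H<G$ (so that $G/\{\id\}=G$ and $\phi$ trivially fixes $\{\id\}$) shows that $g_2$ is again equi-distributed on $G$; equivalently, one notes directly that $(\phi,\phi)$ and left translation by $x$ are automorphisms of the relation partition $(G,R,G)$ and that equi-distribution depends only on that structure. Moreover, by the displayed identity and $f_*g_1=h_1$,
\[
f_*g_2=f_*(g_1\circ\psi)=(f_*g_1)\circ\bar\psi=h_1\circ\bar\psi=h_2 ,
\]
so $g_2$ is an equi-distributed function on $G$, equivalent to $g_1$, whose pushout is $h_2$, as required. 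The only step that is not purely bookkeeping is the fibrewise bijectivity of $\psi$ used to move $f_*$ past $\psi$; once that is in hand nothing remains to compute, and in particular no arithmetic with inner distributions is needed and \thmref{pushOutPreserves} is not even invoked.
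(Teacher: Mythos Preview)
Your proof is correct and is exactly the argument the paper has in mind: the corollary is stated without proof because it is the evident lift of the equivalence datum $(\phi,x)$ from $G/H$ to $G$, followed by an application of the preceding Lemma with $H=\{\id\}$. Your explicit verification that $f_*(g_1\circ\psi)=(f_*g_1)\circ\bar\psi$ via the fibrewise bijection is the only nontrivial check, and you carry it out correctly.
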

This corollary assures that,
as far as enumerating multi difference sets
with given parameters in a finite
group $G$, it suffices to keep the
representatives of multi difference sets
in a successive quotient
$G/H_i$.
A subtle problem is that,
it may occur that
by taking only representatives in $G/H_i$
some multi difference sets in $G/H_{i+1}$ may be missed.
In other words, we could not prove the following claim.
\begin{claim} (Probably false claim.)

Let $G>H>K$ be a sequence of subgroups.
Suppose that $h_1$ and $h_2$ are equivalent
equi-distributed functions
in $G/H$.
Then if $k_1$ is an equi-distributed function
on $G/K$ whose pushout is $h_1$,
there exists an equi-distributed function
$k_2$ on $G/K$
whose pushout is $h_2$
such that $k_1$ and $k_2$ are equivalent.
\end{claim}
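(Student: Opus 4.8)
The plan is to attempt the only natural approach---transporting the data that witnesses the equivalence $h_1 \sim h_2$ down to $G/K$---and to locate precisely where it breaks, since the statement is flagged as probably false. By definition, $h_1 \sim h_2$ in $G/H$ means there are an automorphism $\phi$ of $G$ with $\phi(H) = H$ (as a subset) and an element $x \in G$ such that $h_2 = h_1(x\cdot\phi(-))$. Writing $f : G/K \to G/H$ for the quotient morphism of \lmmref{SchurianScheme}, the obvious candidate is $k_2 := k_1(x\cdot\phi(-))$, to be regarded as a function on $G/K$. For this to be literally defined one needs $\phi$ to descend to a self-map of $G/K$, i.e.\ $\phi(K) = K$; the left translation by $x$ descends to every coset space, so it causes no trouble.

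First I would record the easy case. If the witnessing $\phi$ can be chosen with $\phi(K) = K$, then $(\phi, x)$ induces a morphism of relation partitions $G/K \to G/K$ commuting both with $f$ and with the left $x$-translation on $G/K$; hence $k_2 := k_1(x\cdot\phi(-))$ is an equi-distributed function on $G/K$ that is equivalent to $k_1$ there, and $f_*k_2 = h_1(x\cdot\phi(-)) = h_2$. This is exactly the argument of the preceding Corollary, carried out one quotient lower; indeed the Corollary is precisely the case $K = \{\id\}$, where $\phi(K) = K$ is automatic for every automorphism, which is why the Corollary is true unconditionally.

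Next I would confront the gap. The equivalence attached to the pair $G > H$ only demands $\phi(H) = H$, and an automorphism fixing $H$ setwise need not fix $K$: it may send $K$ to some other subgroup $K' := \phi(K) < H$ that is not even conjugate to $K$ inside $H$. Then $k_1(x\cdot\phi(-))$ naturally lives on $G/K'$, not on $G/K$, and the relation partitions $G/K$ and $G/K'$ need not be isomorphic, so there is no canonical transport back. One could hope to repair this by selecting a \emph{different} pair witnessing $h_1 \sim h_2$, one with $\phi(K) = K$; but there is no reason such a refined witness exists. The subgroup $\{\phi \in \mathrm{Aut}(G) : \phi(H) = H\}$ acts on the subgroups of $H$, and the orbit of $K$ typically contains subgroups other than $K$, whereas the equivalence class of $h_1$ in $G/H$ is governed by this whole stabilizer of $H$, not by the smaller stabilizer of the chain $H > K$. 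So the main obstacle is a genuine mismatch of symmetry groups: the equivalence relation for $G > H$ is strictly coarser than the one needed to push lifts around at the $G/K$ level, and collapsing $G/H$-classes can therefore merge $G/K$-lifts that are not $G/K$-equivalent.

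My honest expectation is that the claim is false, so rather than grinding further on the proof I would switch to hunting a counterexample: a group $G$, a chain $G > H > K$, and an automorphism $\phi$ with $\phi(H) = H$ but $\phi(K)$ not conjugate to $K$ in $H$, together with an equi-distributed $h_1$ on $G/H$ that lifts to some $k_1$ on $G/K$ for which the forced lift of $h_2 = h_1(x\cdot\phi(-))$ exists only over $G/\phi(K)$ and demonstrably has no equivalent lift over $G/K$. This is consistent with the paper proving only the weaker Corollary and not invoking the present statement in the $(120,35,10)$ argument.
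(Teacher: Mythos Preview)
Your analysis is correct and matches the paper's own treatment exactly: the paper does not prove this claim but explicitly flags it as ``probably false,'' and its entire discussion is the single sentence ``The subtlety is that an automorphism of $G$ fixing $H$ may not fix $K$. There is no problem for $K=\{\id\}$.'' You have identified the same obstruction and the same easy case, only in more detail.
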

The subtlety is that an automorphism
of $G$ fixing $H$ may not fix $K$. There
is no problem for $K=\{\id\}$.
\subsection{Computations on groups of order 120}
In \cite{DIFF-SET-BECKER}, the
existence of $(120, 35, 10)$ difference sets is extensively studied.
There are 47 groups of order 120, and 44 are nonabelian,
and 38 of them are eliminated in the above study, except
for
three non-solvable groups and three solvable groups.
The three solvable groups are $G_1$, $G_3$ and $G_7$,
which are named after
the groups $[120, 1]$, $[120, 3]$, and $[120, 7]$
in the small groups library of GAP \cite{GAP}.
The three nonsolvable groups are
$S_5$, the direct product $C_2\times A_5$,
and the special linear group over the five
element field
$\mathrm{SL}(2, 5)$ of size 2.

Using the algorithm described above,
we proved that these six groups have no difference sets of this parameter set.
Let $G$ be one of the six groups.
We choose a descending series of subgroups $H_i$
\[
G = G/H_n \twoheadrightarrow
G/H_{n-1} \twoheadrightarrow \cdots
\twoheadrightarrow G/H_1
\twoheadrightarrow G/H_0
\text{ with }
G = H_0,
\]
and enumerate equi-distributed multi characteristic functions on $G/H_i$
that are possible as the pushout of the difference sets
up to equivalence,
and then by \prpref{equi_tables},
we enumerate those on $G/H_{i+1}$ by lifting, starting from $i=0$.
Note that a pushout of a difference set in $G$ to $G/H_i$ has values
at most $\#H_i$, and hence the search is rather restricted.
Obviously the multiplication of all indices
$[H_i:H_{i-1}]$ ($i=1,\ldots,n-1$)
is 120,
and through experiments, it turned out that
to choose these numbers in descending order
(i.e., tough computations at the beginning,
where the number of variables is small
and the computation is easier).
For the six groups, it suffices to choose $n=4$,
and the indices $5, 3$ and $2$,
in other words,
$\#H_i$ is 120, 24, 8, and 4,
for $i=0, 1, 2, 3$, respectively.
No multi difference set with
expected parameter set
exists
in $G/H_3$.
We list these subgroups and the numbers $MD_i$ of
multi difference sets in $G/H_i$ up to the equivalence
described in the previous section.
The cyclic group of order $n$ is denoted by $C_n$.
\begin{enumerate}
\item $G_1 = \Braket{x, y, z\mid y^3, x^5, z^8, xy(yx)^{-1},zx(xz)^{-1},zyz^{-1}y}$,
	\begin{align*}
		H_0 \coloneqq G_1 >
		H_1 \coloneqq \Braket{y, z} >
		H_2 \coloneqq \Braket{z} >
		H_3 \coloneqq \Braket{z^2},
	\end{align*}
with $MD_i$ ($i=0, \ldots, 3$)
1, 2, 2, and 0,

\item $G_3 = \Braket{x, y, z\mid y^3, x^5, z^8, zyz^{-1}y, zxz^{-1}x, yx(xy)^{-1}}$,
	\begin{align*}
		H_0 \coloneqq G_3 >
		H_1 \coloneqq \Braket{y, z} >
		H_2 \coloneqq \Braket{z} >
		H_3 \coloneqq \Braket{z^2}
	\end{align*}
(the same as above, probably by chance, but the authors are not sure),
with $MD_i$ ($i=0, \ldots, 3$) 1, 2, 2, and 0
(again the same as above).
\item $G_7 = \Braket{x, y, z\mid y^3, x^5, z^8, zyz^{-1}y, yx(xy)^{-1}, zxz^{-1}x^{-2}}$,
	\begin{align*}
		H_0 \coloneqq G_7 >
		H_1 \coloneqq \Braket{y, z} >
		H_2 \coloneqq \Braket{z} >
		H_3 \coloneqq \Braket{z^2}
	\end{align*}
(again the same)
with $MD_i$ ($i=0, \ldots, 3$) 1, 8, 5104, and 0.
\item $S_5 = \Braket{(1,2,3,4,5), (1,2)}$,
	\begin{align*}
		&H_0 \coloneqq S_5 >
		H_1 \coloneqq \Braket{(1,2,3,4),\:(1,2)} >
		H_2 \coloneqq \Braket{(1,2,3,4),\:(1,3)} \\
        &\quad >
		H_3 \coloneqq \Braket{(1,2,3,4)},
	\end{align*}
with $MD_i$ ($i=0, \ldots, 3$) 1, 3, 3488, and 0.
\item $C_2\times A_5 = C_2\times\Braket{(1,2,3,4,5), (1,2,3)}$,
	\begin{align*}
		&H_0 \coloneqq C_2\times A_5 \\
        & \quad >
		H_1 \coloneqq \Braket{(1, ()), (0, (2,3)(4,5)), (0, (2,4)(3,5)), (0, (3,4,5))}\\
		&\quad >
		H_2 \coloneqq \Braket{(1, ()), (0, (2,3)(4,5)), (0, (2,4)(3,5))} \\
        & \quad >
		H_3 \coloneqq \Braket{(1, ()), (0, (2,3)(4,5))},
	\end{align*}
where $()$ denotes the identity,
with $MD_i$ ($i=0, \ldots, 3$) 1, 3, 116, and 0.
\item $\mathrm{SL}(2, 5)$,
	\begin{align*}
		&H_0 \coloneqq \mathrm{SL}(2,5) >
		H_1 \coloneqq \Braket{\begin{pmatrix}-1&1\\2&2\end{pmatrix},\:\begin{pmatrix}-1&2\\1&2\end{pmatrix}}\\
		&\quad >
		H_2 \coloneqq \Braket{\begin{pmatrix}0&3\\3&0\end{pmatrix},\:\begin{pmatrix}0&1\\-1&0\end{pmatrix}} >
		H_3 \coloneqq \Braket{\begin{pmatrix}0&1\\-1&0\end{pmatrix}},
	\end{align*}
with $MD_i$ ($i=0, \ldots, 3$) 1, 3, 116, and 0
(again the same numbers as above).
\end{enumerate}
The cputime consumed is
some 20 minutes for $G_1$ and $G_3$,
30 minutes for $C_2 \times A_5$ and
$\mathrm{SL}(2, 5)$,
60 minutes for $S_5$,
and 120 minutes for $G_7$
in MacBookPro18,4
with Apple M1 Max chip,
ten cores and 64GB RAM.
These figures seem to relate
with the number of candidates
shown as $MD_2$ above.

\bibliographystyle{plain}
\bibliography{diffset}
\end{document}